\documentclass[12pt]{amsart}

\usepackage{psfrag}
\usepackage{color}

\usepackage{graphicx,graphics}
\usepackage{fullpage,amssymb,amsfonts,amsmath,amstext,amsthm,amscd,enumerate,verbatim,tikz,orcidlink}
\usepackage[T1]{fontenc}
\usetikzlibrary{matrix,arrows}

\usepackage[all]{xy} \usepackage{url}

\begin{document}
\newcommand{\note}[1]{\marginpar{\tiny #1}}
\newtheorem{theorem}{Theorem}[section]
\newtheorem{result}[theorem]{Result}
\newtheorem{fact}[theorem]{Fact}
\newtheorem{conjecture}[theorem]{Conjecture}
\newtheorem{lemma}[theorem]{Lemma}
\newtheorem{proposition}[theorem]{Proposition}
\newtheorem{corollary}[theorem]{Corollary}
\newtheorem{facts}[theorem]{Facts}
\newtheorem{question}[theorem]{Question}
\newtheorem{props}[theorem]{Properties}
\theoremstyle{definition}
\newtheorem{example}[theorem]{Example}
\newtheorem{definition}[theorem]{Definition}
\newtheorem{remark}[theorem]{Remark}

\newcommand{\notes} {\noindent \textbf{Notes.  }}
\renewcommand{\note} {\noindent \textbf{Note.  }}
\newcommand{\defn} {\noindent \textbf{Definition.  }}
\newcommand{\defns} {\noindent \textbf{Definitions.  }}
\newcommand{\x}{{\bf x}}
\newcommand{\z}{{\bf z}}
\newcommand{\B}{{\bf b}}
\newcommand{\V}{{\bf v}}
\newcommand{\T}{\mathcal{T}}
\newcommand{\Z}{\mathbb{Z}}
\newcommand{\Hp}{\mathbb{H}}
\newcommand{\D}{\mathbb{D}}
\newcommand{\R}{\mathbb{R}}
\newcommand{\N}{\mathbb{N}}
\renewcommand{\B}{\mathbb{B}}
\newcommand{\C}{\mathbb{C}}
\newcommand{\dt}{{\mathrm{det }\;}}
 \newcommand{\adj}{{\mathrm{adj}\;}}
 \newcommand{\0}{{\bf O}}
 \newcommand{\w}{\omega}
 \newcommand{\av}{\arrowvert}
 \newcommand{\zbar}{\overline{z}}
 \newcommand{\htt}{\widetilde{h}}
\newcommand{\ty}{\mathcal{T}}
\renewcommand\Re{\operatorname{Re}}
\renewcommand\Im{\operatorname{Im}}
\newcommand{\diam}{\operatorname{diam}}
\newcommand{\dist}{\text{dist}}
\newcommand{\ds}{\displaystyle}
\numberwithin{equation}{section}
\newcommand{\cN}{\mathcal{N}}
\renewcommand{\theenumi}{(\roman{enumi})}
\renewcommand{\labelenumi}{\theenumi}
\newcommand{\inte}{\operatorname{int}}

\newcommand{\dn}[1]{{\scriptsize \color{blue}\textbf{Dan's note:} #1 \color{black}\normalsize}}
\newcommand{\af}[1]{{\scriptsize \color{red}\textbf{Alastair's note:} #1 \color{black}\normalsize}}

\date{\today}

\title{On the ternary Cantor set and Julia sets}
\author{Alastair N. Fletcher\,\orcidlink{0000-0003-1942-6928}}
\email{afletcher@niu.edu}
\address{Department of Mathematical Sciences, Northern Illinois University, DeKalb, IL 60115-2888, USA}

\begin{abstract}
In this article, we show that the ternary Cantor set is not the Julia set of a rational map.
\end{abstract}

\maketitle

\section{Introduction}

In this note, we will assume the reader is familiar with the fundamental ideas from complex dynamics such as Fatou sets, Julia sets and so on, and refer to, for example, the books of Carleson and Gamelin \cite{CG} or Milnor \cite{M}.

Denote by $\mathcal{C}$ the standard ternary Cantor set contained in $[0,1]$. It is well-known that the Julia set of a rational map can be a Cantor set, that is, a set that is homeomorphic to $\mathcal{C}$. For example, if $c$ is a parameter that is not in the Mandelbrot set $\mathcal{M}$, then the Julia set of $z^2+c$ is a Cantor set. More generally, if every critical point of a polynomial is in the escaping set, then its Julia set is a Cantor set \cite[Theorem III.4.2]{CG}.

A more recently intiated theme of research is to study the geometry and topology of Julia sets of uniformly quasiregular mappings in $\R^n$ or $S^n$. The author and Nicks \cite[Theorem 1.1]{FN} showed that every such Julia set is uniformly perfect. Informally, this means that the Julia set isn't separated too much. Moreover, the author and Vellis \cite[Theorem 1.1]{FV} showed that for hyperbolic uniformly quasiregular maps (that is, the Julia set does not meet the post-branch set) for which the Julia set is a Cantor set, the Julia set is uniformly disconnected. Informally, this means that the Julia set does not cluster together too much.

On the other hand, \cite[Theorem 1.2]{FV} shows that every compact, uniformly perfect and uniformly disconnected set in the Riemann sphere $\C_{\infty}$ can be realized as the Julia set of a hyperbolic uniformly quasiregular map $f:\C_{\infty} \to \C_{\infty}$. Of relevance here is the uniformization result of David and Semmes \cite{DS} that states a metric space is quasisymmetrically homeomorphic to $\mathcal{C}$ if and only if it is compact, doubling, uniformly disconnected and uniformly perfect. Thus $\mathcal{C}$ {\bf is} the Julia set of a uniformly quasiregular map.

It is natural to then ask if we can take this uniformly quasiregular mapping to be a map with no distortion. In other words, is $\mathcal{C}$ the Julia set of a rational map? The point of this note is to answer this question in the negative, which in turn gives support for using the dynamics of uniformly quasiregular mappings as the natural setting for asking classification questions on the geometry of Cantor sets which are also Julia sets. 

\begin{theorem}
\label{thm:1}
There is no rational map $f$ for which $J(f)$ is $\mathcal{C}$.
\end{theorem}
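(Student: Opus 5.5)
The plan is to show that any rational $f$ with $J(f)=\mathcal{C}$ must be a real Möbius map, which is impossible. There are three steps: prove $f$ has real coefficients, prove $|f'|$ takes values in $\{3^k : k\in\Z\}$ on $\mathcal{C}$, and conclude $f$ is affine. Suppose $f$ has degree $d\ge 2$ and $J(f)=\mathcal{C}$; recall that $f(\mathcal{C})=\mathcal{C}=f^{-1}(\mathcal{C})$.

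\textbf{Step 1: $f$ is real.} Since $f$ has finitely many critical points and $\mathcal{C}$ is uncountable, we can pick $x_0\in\mathcal{C}$ with $x_0\neq\infty$, $f(x_0)\neq\infty$ and $f'(x_0)\neq0$. Because $\mathcal{C}$ is perfect and $f(\mathcal{C})\subset\R$, every derivative $f^{(k)}(x_0)$ can be computed as a limit of difference quotients along sequences in $\mathcal{C}$. Hence each $f^{(k)}(x_0)$ is real, so $f$ has real Taylor coefficients at $x_0$. By the identity theorem applied to $f$ and $\overline{f(\bar z)}$, the map $f$ commutes with complex conjugation and so maps $\widehat{\R}$ to itself. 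In particular, near $x_0$ the map $f$ restricts to a real-analytic diffeomorphism $h$ of an interval. Since $f^{-1}(\mathcal{C})=\mathcal{C}$, $h$ maps $\mathcal{C}$ near $x_0$ onto $\mathcal{C}$ near $f(x_0)$. Gaps (complementary intervals) of $\mathcal{C}$ therefore go to gaps, and the endpoints of $\mathcal{C}\cap I$ go to the endpoints of $\mathcal{C}\cap h(I)$.

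\textbf{Step 2: rigidity of $h$, the main obstacle.} We aim to show that $|h'(x)|\in\{3^k:k\in\Z\}$ for every $x\in\mathcal{C}$ near $x_0$. Take $x\in\mathcal{C}$ and a level-$n$ basic interval $I_n\ni x$ of length $3^{-n}$. Because $h$ is $C^2$, $h$ is affine on $I_n$ up to an error $O(3^{-2n})$. Hence $h(I_n)$ is an interval of length $|h'(x)|3^{-n}(1+O(3^{-n}))$ whose endpoints lie in $\mathcal{C}$. This interval contains $\mathcal{C}\cap h(I_n)$, which is split by the image of the middle gap into two pieces of nearly equal size, with a gap of relative length close to $1/3$.

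The key combinatorial fact to verify is the following. A closed interval $[a,b]$ with $a,b\in\mathcal{C}$, such that $\mathcal{C}\cap[a,b]$ consists of two pieces with a single largest gap of relative size near $1/3$ separating equal-sized halves, must coincide, up to a relative error that tends to zero, with a basic interval of some length $3^{-m}$. I expect this step to need the most care: it has to exclude near-coincidences at the ends using the gap hierarchy of $\mathcal{C}$. If the fact holds, it gives $|h'(x)|\,3^{m_n-n}\to1$ for integers $m_n$. Since $\{3^k\}$ is discrete, this forces $|h'(x)|=3^{k(x)}$ exactly.

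\textbf{Step 3: conclusion.} The function $h'$ is continuous and nonzero near $x_0$, and by Step 2 it takes values in the discrete set $\pm3^{\Z}$ on $\mathcal{C}$. Hence $h'$ is constant, equal to some $c$, on $\mathcal{C}\cap U$ for a neighbourhood $U$ of $x_0$. Then $f'-c$ is holomorphic and vanishes on a perfect set, so $f'\equiv c$ by the identity theorem. Thus $f$ is affine, which contradicts $d\ge2$. This proves Theorem~\ref{thm:1}.
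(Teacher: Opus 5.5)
Your route is correct and genuinely different from the paper's. As written, though, it leaves its crux unproved: the combinatorial fact in Step 2. That step is much easier than you expect.

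Here is a proof of the fact. Let $a<b$ lie in $\mathcal{C}$, and let $B$ be the smallest basic interval containing $[a,b]$. Then $a$ and $b$ lie in different children of $B$. Every gap meeting $(a,b)$ is contained in $(a,b)$, so the gaps in $(a,b)$ are the middle gap of $B$, of length $|B|/3$, together with gaps inside the children, of length at most $|B|/9$. Hence the largest gap has length exactly $|B|/3\geq (b-a)/3$. A relative size tending to $1/3$ therefore forces $(b-a)/|B|\to 1$, and no analysis of near-coincidences at the ends is needed.

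You can even skip the fact. The map $h$ sends every gap $G$ near $x_0$ to a gap, and all gaps of $\mathcal{C}$ have length in $\{3^{-j}: j\geq 1\}$. The mean value theorem gives $|h(G)|=|h'(\xi_G)|\,|G|$ with $\xi_G\in G$, so $|h'(\xi_G)|\in 3^{\Z}$. Now let $G\to x_0$. Since $3^{\Z}$ is discrete in $(0,\infty)$, $h'(x_0)\neq 0$, and $h'$ has constant sign near $x_0$, you get $h'(\xi_G)=h'(x_0)$ along a sequence $\xi_G\to x_0$. The identity theorem then gives $f'\equiv h'(x_0)$ directly.

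Comparison with the paper:
\begin{itemize}
\item The paper first proves Lemma 1: no critical points anywhere on $\mathcal{C}$. Its proof rescales at a putative critical point and runs a case analysis with reflected gaps. The paper needs this global statement to get a bound $1/L\leq |f'|\leq L$ on $\mathcal{C}$. With that bound, the values $3^n$ from Lemma 2 on the dense set $\mathcal{E}$ are finitely many, and pigeonholing produces an accumulating set on which $f'$ is constant.
\item Lemma 2 itself is proved using quasisymmetry of conformal maps and a Montel limit of rescalings.
\item You localise at a single non-critical point, which exists trivially. This makes Lemma 1 unnecessary, and continuity of $h'$ at $x_0$ replaces compactness plus pigeonholing.
\item You also observe that $f$ has real coefficients, which the paper never uses. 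This turns $f$ near $x_0$ into a real diffeomorphism permuting gaps, so elementary calculus replaces the quasisymmetric rescaling.
\end{itemize}

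Your argument is shorter, and it isolates what is really used: the ratios of gap lengths of $\mathcal{C}$ form a discrete subset of $(0,\infty)$. It therefore applies verbatim to any Cantor set in $\R$ with this property, for instance every middle-$\lambda$ Cantor set. This bears on the question raised in the paper's introduction. The paper's arguments do not rely on $f$ preserving $\R$, and they are phrased in the quasisymmetric language of the uniformly quasiregular setting that motivates the note. For the theorem itself, however, your localisation is the more economical route.
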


As far as the author is aware, Theorem \ref{thm:1} has not been addressed in the literature before. The reader will observe below that the proof of Theorem \ref{thm:1} uses some of the geometric features of $\mathcal{C}$, so it is a natural question to ask if Theorem \ref{thm:1} holds for Cantor sets which arise as the attractor set of an iterated function system generated by similarities. 

Another natural question is to ask for the minimal maximal dilatation $K(\mathcal{C})$ of a uniformly quasiregular map $f$ for which $J(f) = \mathcal{C}$. Theorem \ref{thm:1} implies that $K(\mathcal{C})$ must be strictly larger than $1$. One could ask if there is a geometric property of the Cantor set which would yield the analogous quantity $K(X)$ for other uniformly perfect and uniformly disconnected Cantor sets $X$? A related question is addressed in the work of Shiga \cite{S} on moduli spaces of Cantor sets, that is, when two Cantor sets are ambiently quasiconformally equivalent.

Finally here, we point out that there is no transcendental entire function $f$ with $J(f)$ totally disconnected, as Baker \cite{B} showed that $J(f)$ must contain a continuum in this setting.

The author wishes to thank Vyron Vellis for many conversations on topics relating to those covered in this note. The statement of Theorem \ref{thm:1} was first given in an online talk of the author in the {\it Mostly Teichm\"uller Spaces} seminar in 2021 - the author thanks the organiser, Dragomir Saric, for the invitation to speak at this seminar.

\section{Critical points and critical values}

{\bf Notation:} Throughout, we denote by $D(z,r)$ the open disk in $\C$ centred at $z$ of radius $r>0$.\\

We recall that one construction of $\mathcal{C}$ is to start with $[0,1]$, remove the middle-thirds interval $(1/3,2/3)$, then remove the intervals $(1/9,2/9)$ and $(7/9,8/9)$, and so on. There are then many points in $\mathcal{C}$ that arise as endpoints of complementary intervals, such as $1/3,2/3$ and so on. Of course, there remain uncountably many other points in $\mathcal{C}$. Denote by $\mathcal{E}$ the set of endpoints of complementary intervals of $\mathcal{C}$ in $\R$.

The main steps in the proof of Theorem \ref{thm:1} are the following two lemmas. We first want to show that a supposed rational map $f$ with $J(f) = \mathcal{C}$ cannot have any critical points contained in $\mathcal{C}$.

\begin{lemma}
\label{lem:1}
Let $f$ be a rational map and suppose $J(f) = \mathcal{C}$. Then $f$ has no critical points in $\mathcal{C}$.
\end{lemma}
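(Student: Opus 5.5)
Suppose, for contradiction, that $z_0\in\mathcal{C}$ is a critical point of $f$ of local degree $d\ge 2$. Near $z_0$ we can write $f(z)=f(z_0)+\phi(z)^d$, where $\phi$ is a local conformal map with $\phi(z_0)=0$. The plan is to use complete invariance of $J(f)=\mathcal{C}$ to compare the local geometry of $\mathcal{C}$ at $z_0$ with the local geometry at $w_0=f(z_0)\in\mathcal{C}$. The key feature is that $\mathcal{C}\subset\R$ lies in a line. Any small neighbourhood of a point of $\mathcal{C}$ meets $\mathcal{C}$ in infinitely many points, since $\mathcal{C}$ is perfect. So near every point, $\mathcal{C}$ contains points accumulating along a single real direction, and these points approach from at most two opposite directions (angles $0$ and $\pi$).

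First, use $f^{-1}(\mathcal{C})=\mathcal{C}$. Take a small disk $U=D(z_0,r)$ on which the normal form holds, with $f(U)=D(w_0,\rho)$. The set $\mathcal{C}\cap D(w_0,\rho)$ is an infinite subset of $\R$ accumulating at $w_0$, from at least one side. Choose a sequence $w_n\to w_0$ in $\mathcal{C}$ with $w_n\neq w_0$ and, after passing to a subsequence, all $w_n-w_0$ of the same sign. Each $w_n$ has $d$ preimages in $U$. These lie in $\mathcal{C}$ by complete invariance, and since $\phi$ is conformal at $z_0$ they approach $z_0$ asymptotically along $d$ rays at equally spaced angles $2\pi/d$.

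Second, derive the contradiction. All these preimages lie in $\R$, so the directions $\arg(\zeta-z_0)$, taken modulo the rotation induced by $\phi'(z_0)$, must lie in $\{0,\pi\}$. For $d\ge3$ we get at least three distinct limiting directions, which is impossible. For $d=2$ the two preimage rays point in opposite directions, so this is consistent only if the line is arranged suitably. Then I use both sides. The two rays are $\pm$ a fixed direction $e^{i\theta}$, and realness forces $e^{i\theta}\in\R$ after normalising, i.e.\ $\phi'(z_0)^2$ times the sign of $(w_n-w_0)$ fixes the orientation. Now $\mathcal{C}$ accumulates at $z_0$ from at least one side along $\R$, and the image of those points under $f$ lands, to leading order, on one side of $w_0$ only. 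Conversely, if $w_0$ is accumulated by $\mathcal{C}$ from the other side as well, those preimages would lie along the perpendicular direction $\pm i e^{i\theta}$, which is not real, giving a contradiction. So $w_0$ must be accumulated from only one side, i.e.\ $w_0\in\mathcal{E}$ or $w_0\in\{0,1\}$.

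The main obstacle is this remaining case $d=2$ with $w_0$ a one-sided accumulation point. For it I would use the self-similar geometry of $\mathcal{C}$. Near $w_0$, the set $\mathcal{C}$ looks like a scaled copy $w_0\pm 3^{-k}\mathcal{C}$. Its preimage under $w\mapsto w_0+c(z-z_0)^2$ near $z_0$ is, to leading order, the symmetric set $z_0\pm\sqrt{3^{-k}\mathcal{C}/|c|}$. This set must coincide with $\mathcal{C}$ near $z_0$ up to a conformal error. Hence $z_0$ is accumulated from both sides, so $z_0\notin\mathcal{E}\cup\{0,1\}$, and $\mathcal{C}$ near $z_0$ is a square-root distortion of a Cantor set. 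To reach a contradiction, compare ratios of gaps. In $w$-space, the gaps at scale $3^{-k}$ adjacent to $w_0$ have lengths in ratio $1:3$ across consecutive levels, so consecutive gaps accumulate geometrically with ratio $1/3$. Under the square root, the corresponding gaps at $z_0$ accumulate with ratio $1/\sqrt3$. But any point of $\mathcal{C}$ has gap/scale ratios that are powers of $3$: gap lengths $3^{-m}$ and positions in $\Z[1/3]$-type configurations. A consecutive length ratio of $\sqrt3$ (in the limit, since $\phi$ is asymptotically linear) cannot occur, because the set of ratios of lengths of complementary intervals of $\mathcal{C}$ is $\{3^j\}$. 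Checking this limiting-ratio argument carefully, with the conformal error from $\phi$ being $1+O(|z-z_0|)$, is where I expect the real work.
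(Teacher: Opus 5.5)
Your proposal is correct. For the decisive case $d=2$ it takes a genuinely different route from the paper.

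Your first step (roots of unity force $d=2$, and one-sidedness excludes $z_0\in\mathcal{E}$) matches the paper's opening paragraph. You go further and prove $w_0=f(z_0)\in\mathcal{E}$, which the paper only names as the remaining case and never uses.

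From there the paper works entirely at $z_0$. It rescales $f$ using the mean radius and invokes Montel to get $h_m\to z^2$. It then exploits a local asymmetry of $\mathcal{C}$: at each scale there is a unique gap $J_m$ of length $3^{-m-1}$ in $D(z_0,3^{-m})$, and its mirror image $J_m^*$ is not a gap. Evenness of $z^2$ then yields a contradiction through a two-case analysis, according to whether $J_m^*$ lies in a larger gap or meets $\mathcal{C}$.

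You instead use the exact self-similarity of $\mathcal{C}$ at the critical value. You pull back the gaps $(w_0+3^{-j-1},w_0+2\cdot 3^{-j-1})$ through the square root. This produces complementary intervals of $\mathcal{C}$ on one side of $z_0$ whose consecutive length ratios tend to $\sqrt3\notin 3^{\Z}$.

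\begin{itemize}
\item What your route buys: it avoids normal families and the delicate case analysis, and it turns the contradiction into a clean arithmetic one.
\item What it costs: it needs the structure of $\mathcal{C}$ at $w_0$ and the arithmetic of gap lengths. The paper's argument only uses the local geometry of $\mathcal{C}$ at $z_0$.
\end{itemize}

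The step you flag as the real work is in fact easy. Write $f=w_0+\phi^2$ with $\phi$ real and increasing on $\R$ near $z_0$. By the mean value theorem the pulled-back lengths are $(\sqrt2-1)\,3^{-(j+1)/2}\,(\phi^{-1})'(\xi_j)$ with $\xi_j\to 0$, so the ratio tends to $\sqrt3$, which stays a fixed distance from both $1$ and $3$.

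What does need a sentence is that these pulled-back intervals are genuinely complementary intervals of $\mathcal{C}$.
\begin{itemize}
\item Since $f$ is real on $\mathcal{C}$, which accumulates at $z_0$, the identity theorem gives $f(z)=\overline{f(\bar z)}$. Hence $f$ (and $\phi$) is real and strictly monotone on each side of $z_0$ in $\R$.
\item Complete invariance then puts the endpoints of each pulled-back interval in $\mathcal{C}$ and keeps its interior out of $\mathcal{C}$.
\item So each such interval has length exactly $3^{-m}$ for some $m$, which is what the ratio contradiction needs.
\end{itemize}
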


\begin{proof}

Let $z_0 \in \mathcal{C}$ and suppose for a contradiction that $f'(z_0) = 0$ with local index $d\geq 2$. Let $z_n \in \mathcal{C} \setminus \{ z_0 \}$ with $|z_n - z_0| \to 0$. As critical points are isolated, we can find $r>0$ so that there are precisely $d$ pre-images of $f(z_n)$ in $D(z_0,r)$ for all sufficiently large $n$. As $J(f) = \mathcal{C}$ is completely invariant for $f$, it follows that $f^{-1}(f(z_n)) \cap D(z_0,r) \subset \R$. Informally, for large $n$, the pre-images are asymptotically related to each other via multiplication by $d$'th roots of unity. We conclude that the only allowable value of $d$ is $d=2$. Moreover, the same argument shows that elements of $\mathcal{E}$ cannot be critical points at all, so $z_0 \in \mathcal{C} \setminus \mathcal{E}$. We could still conceivably have a critical point in $\mathcal{C} \setminus \mathcal{E}$ mapping into $\mathcal{E}$ and it is this situation we need to rule out.

Next we will construct a rescaling of $f$ near $z_0$ in the spirit of Gutlyanskii et al \cite{GMRV}. Denote by $\rho_f(r)$ the mean radius of $f$ centred at $z=z_0$ with radius $r$, that is
\[ \rho_f(r) = \left ( \frac{ | f(D(z_0 , r) ) | }{\pi } \right )^{1/2},\]
where $|E|$ denotes the Lebesgue measure of a set $E$. As the Taylor expansion of $f$ at $z_0$ is
\[ f(z) - f(z_0) = C(z-z_0)^2 + O( |z-z_0|^3 ), \]

\noindent it follows that
\begin{equation}
\label{eq:rho} 
\rho_f(r) = |C|r^2 + O(r^3)
\end{equation}
as $r\to 0$.
For $m\in \N$, define $h_m : \D \to \C$ via
\[ h_m(z) = \frac{ f(z_0 + z/3^m ) - f(z_0) }{\rho_f ( 1/3^m) } .\]
Using the Taylor series for $f$ and \eqref{eq:rho}, we have
\begin{equation}
\label{eq:a} 
h_m(z) = \frac{ C(z/3^m)^2 + O( |z|^3 / 3^{3m} ) }{ |C|/3^{2m} + O( 1/3^{3m} ) } = \frac{C}{|C|} z^2 + O( 1/3^m) ,
\end{equation}
as $m\to \infty$ and for $z\in \D$. We may invoke Montel's Theorem here to guarantee uniform convergence on compact subsets of $\D$, and it is evident that $h_m$ converges to $h_0(z) = e^{i\arg C} z^2$.

Now we turn to the interplay of the sequence $h_m$ with the Cantor set.
For each $m\in \N$, in a neighbourhood of $z_0$ of radius $3^{-m}$, there exists a unique complementary interval $J_m = (s_m,t_m)$ of $\mathcal{C}$ in $\R$ of diameter $3^{-m-1}$, where $s_m,t_m \in \mathcal{C}$. Denote by $U_m$ the disk $D( (s_m+t_m)/2 , |t_m-s_m|/2 )$ and $V_m$ the disk $D( ( s_m+t_m)/2 , |t_m-s_m| /4 )$. Further, we denote by $s_m^*, t_m^*, J_m^*, U_m^*$ and $V_m^*$ the reflections of $s_m,t_m,J_m,U_m$ and $V_m$ respectively in the line $x = \Re(z_0)$. We observe that by the uniqueness of $J_m$, $J_m^*$ is not a complementary interval of $\mathcal{C}$.

Without loss of generality, we may pass to a subsequence and assume that $z_0 < s_{m_k} < t_{m_k}$ and $f(z_0) < f(s_{m_k}) < f(t_{m_k})$ for all $k$, as the other cases will follows analogously. 
We conclude from \eqref{eq:a} and our assumption that $s_{m_k} > z_0$ and $f(s_{m_k}) > f(z_0)$ for all $m$ that the limit function from \eqref{eq:a} is $h_0(z) = z^2$. 

For each $k$, we observed above that $J_{m_k}^*$ is not a complementary interval of $\mathcal{C}$. There are two possibilities: either $J_{m_k}^*$ is contained in a complementary interval, or it intersects $\mathcal{C}$. By passing to a further subsequence, we may assume that the same case occurs for each $k$.

{\bf Case 1:} $J_{m_k}^*$ is contained in a complementary interval $I_{m_k}$ for each $m_k$. We must necessarily have that the complementary interval is of diameter at least $3^{-m_k}$ and, in particular, 
\[ (z_0 - 3^{-m_k} , s_{m_k}^*) \subset \R \setminus \mathcal{C}.\] 
As $h_{m_k }\to z^2$ uniformly on compact subsets of $\D$, it follows that the pre-image $$(f^{-1} ( f(t_{m_k}) )\cap D(z_0,3^{-m_k}) ) \setminus \{ t_{m_k}\}$$ must lie in $(z_0 - 3^{-m_k} , s_{m_k}^*)$ for all large enough $k$. As this point must simultaneously be in $\mathcal{C}$ by the complete invariance of $\mathcal{C}$ under $f$, and lie in a complementary interval, this yields a contradiction.

{\bf Case 2:} $J_{m_k}^*$ intersects $\mathcal{C}$ for each $k$. In this case, each complementary interval of $\mathcal{C}$ contained in $J_{m_k}^*$ has diameter at most $3^{-m_k-2}$. In particular, $V_{m_k}^* \cap \mathcal{C}$ is non-empty for each $k$. As $V_{m_k}^*$ is compactly contained in $U_{m_k}^*$ and as $h_{m_k} \to z^2$ uniformly on compact subsets of $\D$, it follows that for large enough $k$, 
\[ f_{m_k} ( V_{m_k}^* ) \subset f_{m_k} (U_{m_k}).\] 
However, $f_{m_k} ( U_{m_k})$ does not meet $\mathcal{C}$ but, by complete invariance again, $f_{m_k} ( V_{m_k}^* )$ does. This yields a contradiction.

As these two cases have been ruled out, we conclude that $f$ cannot have critical points in $\mathcal{C}$.
\end{proof}

Next we want to show that the derivative of any point in $\mathcal{C}$ has a certain special form.

\begin{lemma}
\label{lem:2}
Let $f$ be rational and suppose $J(f) = \mathcal{C}$. Then for any $z \in \mathcal{E}$, there exists $n\in \Z$ such that $f'(z) = 3^n$.
\end{lemma}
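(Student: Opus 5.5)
Fix $z\in\mathcal{E}$. By Lemma \ref{lem:1}, $f'(z)\neq 0$, so $f$ is a local biholomorphism near $z$. Since $f$ maps $\mathcal{C}$ onto itself and $\mathcal{C}\subset\R$, $f$ is real on a set accumulating at $z$. Hence the Taylor coefficients of $f$ at $z$ are real, $f$ is real-analytic on a real neighbourhood of $z$, and $f'(z)\in\R\setminus\{0\}$. Write $w=f(z)\in\mathcal{C}$ and $\lambda=f'(z)$. The plan is to compare the gap structure of $\mathcal{C}$ near $z$ with the gap structure near $w$, using small-scale blow-ups as in the rescaling argument of Lemma \ref{lem:1}, but now with an affine limit.

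\textbf{Step 1 (local geometry at an endpoint).} Say $z$ is a left endpoint of a gap, so that $\mathcal{C}$ lies to the left of $z$ locally. Near $z$, the set $\mathcal{C}$ is exactly self-similar: $(\mathcal{C}-z)\cap(-\delta,0]$ is invariant under $x\mapsto x/3$, with gaps of lengths comparable to $3^{-k}$ located at $-c\,3^{-k}$. In particular, the rescaled sets $3^m(\mathcal{C}-z)\cap[-1,0]$ are eventually constant; call this limit $A$.

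\textbf{Step 2 (the endpoint is sent to an endpoint).} Let $g_m(x)=3^m\bigl(f(z+x3^{-m})-w\bigr)$. These maps converge uniformly on compact sets to $x\mapsto\lambda x$. By complete invariance, together with the fact that $f$ is injective near $z$, $f$ maps $\mathcal{C}\cap D(z,r)$ onto $\mathcal{C}\cap f(D(z,r))$. Since $z$ has a gap on one side, $w$ has a gap on one side, so $w\in\mathcal{E}$. The rescaled sets $3^m(\mathcal{C}-w)$ then also eventually agree near $0$ with a self-similar one-sided set $B$, which is a reflection of $A$ if $w$ is of the other endpoint type.

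\textbf{Step 3 (forcing $|\lambda|$ to be a power of $3$).} Passing to the Hausdorff limit of $g_m(3^m(\mathcal{C}-z))=3^m(\mathcal{C}-w)$ on a fixed compact window gives $\lambda A=B$ on that window. The gap structure of $A$ (gap endpoints at a discrete geometric sequence $\{-a3^{-k}\}$ and $\{-b3^{-k}\}$) is then mapped by multiplication by $|\lambda|$ onto the same sequence. So $|\lambda|$ must map the set of gap lengths $\{\ell 3^{-k}\}$ to itself, which forces $|\lambda|=3^n$. One point needs care: near an endpoint of $\mathcal{C}$, is the only scaling symmetry the powers of $3$, or could a ratio such as $2$ creep in through the first gap? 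I would check this explicitly. Near $0$, the gaps of $\mathcal{C}$ are $(\tfrac13,\tfrac23)3^{-k}$ together with smaller ones, and the largest gap in $[0,3^{-k}]$ has length $3^{-k-1}$ and lies at $[3^{-k-1},2\cdot3^{-k-1}]$. Comparing the largest gaps in corresponding windows pins down $|\lambda|\in 3^{\Z}$. This comparison of gap sizes is the main obstacle; I would work with ratios of consecutive largest-gap positions to make it rigorous.

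\textbf{Step 4 (the sign).} Finally, $\lambda>0$: a negative $\lambda$ would swap the sides, sending the Cantor side of $z$ to the gap side of $w$. That contradicts $f(\mathcal{C}\cap D)=\mathcal{C}\cap f(D)$, unless $w$ is an endpoint of the opposite type. This remaining case needs separate handling. I would try to rule it out using the intermediate value behaviour of the real-analytic map $f|_\R$ together with the orientation of the nearby gaps, and if that fails I would fall back on the statement holding up to sign.
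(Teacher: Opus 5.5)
Your argument is correct in substance (it establishes $|f'(z)|\in 3^{\Z}$; see the second paragraph on the sign) and takes a leaner route than the paper.

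\textbf{Comparison with the paper.} The paper never uses that $f$ is real on a real interval around $z_0$. It works only with the near-isometric quasisymmetry of $f$ on small disks, so it cannot assume that $f(s_{m_k})$ and $f(t_{m_k})$ bound the same complementary interval. It therefore has to:
\begin{enumerate}
\item locate a largest gap $(p_{m_k},q_{m_k})$ of length $3^{-n_k}$ inside $(f(s_{m_k}),f(t_{m_k}))$;
\item rescale the range by a separate factor $3^{n_k-1}$;
\item extract a linear limit $3^{\alpha}f'(z_0)z$ by Montel's theorem;
\item pin that limit down with $h_0(1/3)=1/3$.
\end{enumerate}
Your preliminary observation removes most of this. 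Real Taylor coefficients make $f$ real and strictly monotone near $z$, so by complete invariance $f$ maps each complementary interval near $z$ onto a complementary interval near $w$. That exact correspondence is what lets you use the same scale $3^m$ on both sides, with the limit $x\mapsto\lambda x$ coming from differentiability alone.

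\textbf{Step 3 is easier than you think.} The ``main obstacle'' you flag is not one. Every complementary interval of $\mathcal{C}$ has length exactly a power of $3$, so the gap of length $1/3$ in $A$ is carried to a gap of length $|\lambda|/3\in 3^{\Z}$, and you are done. You can even drop tangent sets and Hausdorff limits. $f$ maps the gap $(s_m,t_m)$ of length $3^{-m-1}$ onto a gap of length $3^{-k_m}$. By the mean value theorem, $3^{m+1-k_m}\to|\lambda|$, so this sequence in the discrete set $3^{\Z}$ is eventually equal to $|\lambda|$. This version does not need $z\in\mathcal{E}$, since such gaps accumulate at every point of $\mathcal{C}$.

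\textbf{Step 4 cannot succeed.} No local argument can exclude the negative sign. The map $x\mapsto 1-x$ preserves $\mathcal{C}$ and sends $1/3$ (with $\mathcal{C}$ on its left) to $2/3$ (with $\mathcal{C}$ on its right), with derivative $-1$. More generally, the germ of $\mathcal{C}$ at an endpoint of one type is carried onto the germ at an endpoint of the other type by an orientation-reversing similarity of ratio $-3^{n}$. So monotonicity and gap orientation impose no constraint, and the intermediate-value approach you sketch will fail.

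What a local argument proves is $f'(z)=\pm 3^{n}$. The paper's proof has the same limitation. It treats only the case $f(z_0)<f(s_{m_k})$ and asserts the other cases follow by modification. In the orientation-reversing case, however, that modification yields $h_0(1/3)=-1/3$ and hence $f'(z_0)=-3^{-\alpha}$.

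Your fallback ``up to sign'' is therefore the right statement. It is also all that the proof of Theorem~\ref{thm:1} uses, since $\pm 3^{\alpha}$ with $1/L\le 3^{\alpha}\le L$ still ranges over finitely many values.
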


\begin{proof}

Let $z_0 \in \mathcal{E}$. First, we show that $f$ maps $\mathcal{E}$ into $\mathcal{E}$. By Lemma \ref{lem:1}, $z_0$ is not a critical point of $f$. Therefore $f$ is a conformal homeomorphism in a neighbourhood $U$ of $z_0$. We may assume that $U$ is small enough that $\mathcal{C}\cap U$ lies on one side of $z_0$. Then by complete invariance of $\mathcal{C}$, $\mathcal{C} \cap f(U)$ can also only lie on one side of $f(z_0)$. It follows that $f(z_0)\in \mathcal{E}$. Similarly, we could show that $\mathcal{E}$ is completely invariant for $f$, but we do not need this here.

Now, as in the proof of Lemma \ref{lem:1}, for $m\in \N$, find the unique complementary interval $(s_m,t_m)$ of diameter $1/3^{m+1}$ contained in the disk $D(z_0 , 1/3^m)$. In fact, if $z_0$ is an endpoint of a complementary interval of size $3^{-j}$, then for $m>j$, we have either $(s_m,t_m) = (z_0 +3^{-m-1} , z_0 + 2\cdot 3^{-m-1})$ or $(s_m,t_m) = (z_0 - 2\cdot 3^{-m-1} , z_0 - 3^{-m-1})$, depending on whether this interval is on the right or left of $z_0$.

By passing to a subsequence, without loss of generality, we may again assume that $z_0 < s_{m_k} <t_{m_k}$ and $f(z_0) < f(s_{m_k}) < f(t_{m_k})$ in $\R$. Other cases can be dealt with via the appropriate modifications. With these assumptions, we have 
\begin{equation}
\label{eq:smtm}
s_{m_k} = z_0 + 1/3^{m_k+1} \text{ and } t_{m_k} = z_0 + 2/3^{m_k+1}.
\end{equation}

As $f$ maps $\mathcal{E}$ into $\mathcal{E}$, it follows that $f(s_{m_k})$ and $f(t_{m_k})$ are in $\mathcal{E}$. We cannot a priori assume that $f(s_{m_k})$ and $f(t_{m_k})$ are endpoints of the same complementary interval, but there is a maximal complementary interval of diameter $3^{-n_k}$ contained in $(f(s_{m_k}) , f(t_{m_k}))$, say $(p_{m_k},q_{m_k})$. 

By Lemma \ref{lem:1}, $f$ is a conformal bijection in a neighbourhood of $z_0$. Interpreting this conformality in terms of quasisymmetry (see Heinonen's book \cite{H} for more on quasisymmetric maps), it follows that given $\delta >0$ we can find $r>0$ so that if $z_1,z_2,z_3 \in \overline{D(z_0 , r)}$ then
\begin{equation} 
\label{eq:qs}
\frac{ |f(z_1) - f(z_2)|}{|f(z_1) - f(z_3)|} \leq (1+\delta) \frac{ |z_1 - z_2| }{ |z_1 - z_3 |} .
\end{equation}
In particular, as $|z_0 - s_{m_k}| = |s_{m_k} - t_{m_k}|$, given $\delta >0$, we can choose $M\in \N$ so that, for $k\geq M$, we have
\begin{equation}
\label{eq:1} 
\frac{1}{1+\delta} \leq \frac{ |f(z_0) - f(s_{m_k})| }{| f(t_{m_k}) - f(s_{m_k}) |} \leq 1+\delta .
\end{equation}
Hence for $k\geq M$, $(p_{m_k} , q_{m_k})$ is contained in $D(f(z_0) , 3^{-n_k+1} )$.

Next, we define a rescaling $h_k:\D\to \C$ via
\begin{equation}
\label{eq:rescale} 
h_k(z) = 3^{n_k-1} \left ( f \left ( z_0 + \frac{z}{3^{m_k}} \right ) - f(z_0) \right ). 
\end{equation}
Note that this rescaling is scaled differently than the one from the proof of Lemma \ref{lem:1} which used the mean radius.
By using the Taylor expansion for $f$ about $z_0$, we see that
\begin{align} 
\label{eq:2}
h_k(z) &= 3^{n_k-1} \left ( \frac{ f'(z_0)z}{3^{m_k}} + O\left ( \frac{ |z|^2}{3^{2m_k} } \right ) \right ) \\
&= 3^{n_k - 1 -m_k} f'(z_0) z + O( |z|^2 3^{n_k-1-2m_k} ) . \nonumber
\end{align}

Clearly $h_k(0)=0$. By \eqref{eq:1} and the facts that $|s_{m_k} - t_{m_k}| = |s_{m_k}-z_0|$ and $(p_{m_k},q_{m_k}) \subset (f(s_{m_k}) , f(t_{m_k}))$, it follows that
\[ |f(s_{m_k}) - f(z_0) | \geq \frac{ |f(s_{m_k}) - f(t_{m_k}) |} {1+\delta} \geq \frac{3^{-n_k}}{(1+\delta)  }.\]
We conclude that for $k$ large enough, as $f(z_0) \in \mathcal{E}$ and our assumption that $f(s_{m_k})$ and $f(t_{m_k})$ lie on the right of $f(z_0)$, we must have $p_{m_k} = f(z_0) + 1/3^{n_k}$ and $q_{m_k} = f(z_0) + 2/3^{n_k}$.
Further, by \eqref{eq:smtm} and \eqref{eq:rescale}, we have
\begin{equation}
\label{eq:hk} 
h_k(1/3) = 3^{n_k - 1} f(s_{m_k}) \leq 3^{n_k-1} p_{m_k} =1/3 \text{ and } h_k(2/3) = 3^{n_k-1} f(t_{m_k}) \geq 3^{n_k - 1}q_{m_k} = 2/3.
\end{equation}
For $k$ large enough, $D(z_0 , 3^{-m_k}) \subset \overline{D(z_0,r)}$ and so \eqref{eq:qs} applies on $D(z_0,3^{-m_k})$ to show that $f$ is quasisymmetric there. As $h_k$ is obtained from pre- and post-composing $f$ on $D(z_0,3^{-m_k})$ be dilatations and translations, it follows that $h_k$ is quasisymmetric on $\D$ with the same bounds as \eqref{eq:qs}. In particular, for large enough $k$ and by \eqref{eq:hk}, we have $h_k(\D) \subset D(0,2)$.
Hence Montel's Theorem may be applied to pick out a subsequence $h_{k_j}$ that converges uniformly on compact subsets of $\D$ to $h_0$. As $h_{k_j}(2/3) \geq 2/3$ by \eqref{eq:hk}, it follows that $h_0$ is non-constant. 

Due to this convergence, by \eqref{eq:2} it follows that $n_{k_j}-1-m_{k_j}$ is a convergent sequence of integers which is thus eventually constant, given by say $\alpha \in \Z$. We conclude from \eqref{eq:2} that $h_0(z) = 3^{\alpha} f'(z_0) z$.
Using \eqref{eq:hk} again together with \eqref{eq:1} shows
\[ \lim_{j\to \infty} \frac{ | h_{k_j}(1/3) - h_{k_j}(2/3) |}{ |h_{k_j}(1/3) - h_{k_j}(0) |} = 1,\]
and then it follows that $h_0(1/3) = 1/3$. We conclude that $f'(z_0) = 3^{-\alpha}$.

\end{proof}

\section{$\mathcal{C}$ is not a Julia set of a rational map}

With these two lemmas in hand, the proof of Theorem \ref{thm:1} proceeds as follows.

\begin{proof}[Proof of Theorem \ref{thm:1}]
Suppose that $f$ is rational and $J(f) = \mathcal{C}$. As $\mathcal{C}$ is a compact subset of $\C$, and as $J(f)$ is completely invariant, it follows that $f$ has no poles in $\mathcal{C}$. From Lemma \ref{lem:1} it follows that there are no critical points of $f$ in $\mathcal{C}$ either. Hence there exists $L\geq 1$ such that
\[ \frac{1}{L} \leq |f'(z) | \leq L \]
for all $z \in \mathcal{C}$. There are only finitely many choices of $\alpha\in \Z$ so that
\[ \frac{1}{L} \leq 3^\alpha \leq L.\]
In particular, by Lemma \ref{lem:2}, on the dense subset $\mathcal{E}$ of $\mathcal{C}$, $f'$ can only take on finitely many values. Consequently, we can find one of these values, say $\lambda$, and find a sequence of distinct elements $(z_k)_{k=1}^{\infty}$ of $\mathcal{C}$ for which $f'(z_k) = \lambda$ for all $k\in \N$. As this sequence is contained in a compact subset of $\C$, it necessarily has a convergent subsequence and we then conclude from the Identity Theorem that $f'$ is constant. Hence $f$ is linear, but the Julia set of a linear map cannot be $\mathcal{C}$ and this contradiction completes the proof.
\end{proof}

\end{document}